\documentclass[12pt,oneside,english]{amsart}
\textwidth=13.5cm \textheight=24cm \hoffset=-1cm
\usepackage[latin1]{inputenc}
\usepackage{amssymb}

\makeatletter
\newtheorem{theorem}{Theorem}
\newtheorem{lemma}{Lemma}
\newtheorem{proposition}{Proposition}

\newtheorem{remark}{Remark}

\usepackage{babel}

\makeatother
\begin{document}
\baselineskip=17pt

\title{Representation of positive integers by the form $x^3+y^3+z^3-3xyz$}

\author{Vladimir Shevelev}
\address{Departments of Mathematics \\Ben-Gurion University of the
 Negev\\Beer-Sheva 84105, Israel. e-mail:shevelev@bgu.ac.il}

\subjclass{11N32.}

\begin{abstract}
We study a representation of positive integers by the form $x^3+y^3+z^3-3xyz$ in the
conditions $0\leq x\leq y\leq z,\enskip z\geq x+1.$
\end{abstract}

\maketitle
\section{Introduction}
Let $F(x,y,z)=x^3+y^3+z^3-3xyz.$ For a positive integer $n,$ denote by $\nu(n)$
the number of ways to write $n$ in the form $F(x,y,z)$ in the
conditions $0\leq x\leq y\leq z,\enskip z\geq x+1.$ Indeed, the case $z=x$
is not interesting, since in this case $F(x,y,z)=F(x,x,x)=0.$ Below we proved the following
results:\newline
(i)for every positive $n,$ except for $n\equiv\pm3 \pmod9$ (cf.A074232 \cite{3}),
 $\nu(n)>=1;$\newline
(ii) for the exceptional $n,$ $\nu(n)=0;$\newline
(iii) for every prime $p\neq3,$ $\nu(p)=\nu(2p)=1;$\newline
(iv)\enskip$\limsup (\nu(n))=\infty;$\newline
(v) for every positive $n,$ there exists $k$ such that $\nu(k)=n.$
\section{Lower estimate of $F(x,y,z)$}
\begin{proposition}\label{prop1}
If $z\geq x+1,$ then
\begin{equation}\label{1}
F(x,y,z)\geq3z-2.
\end{equation}
\end{proposition}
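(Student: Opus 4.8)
The plan is to exploit the classical factorization
\[
F(x,y,z)=(x+y+z)\cdot\tfrac12\bigl[(x-y)^2+(y-z)^2+(z-x)^2\bigr],
\]
which turns the cubic into a product of two manifestly nonnegative pieces. First I would introduce the gap variables $a=y-x\ge0$ and $b=z-y\ge0$; the hypothesis $z\ge x+1$ becomes $a+b=z-x\ge1$, and since $x,y,z$ are integers, $a$ and $b$ are nonnegative integers. A direct computation rewrites the two factors as $x+y+z=3x+2a+b$ and $\tfrac12[(x-y)^2+(y-z)^2+(z-x)^2]=a^2+ab+b^2$, so that
\[
F(x,y,z)=(3x+2a+b)\,(a^2+ab+b^2).
\]

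Second, writing $Q=a^2+ab+b^2$ and $t=a+b\ge1$, I would isolate the dependence on $x$. Since $Q\ge1$ and $x\ge0$, the term $3x(Q-1)$ is nonnegative, and subtracting the target while using $2a+b=t+a$ and $3z-2=3x+3t-2$ gives
\[
F-(3z-2)=3x(Q-1)+(t+a)Q-3t+2\ge(t+a)Q-3t+2.
\]
Thus it suffices to prove the $x$-free inequality $(t+a)Q\ge3t-2$.

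The key elementary fact is that $Q=a^2+ab+b^2\ge a+b=t$ for nonnegative integers, which follows from $a^2\ge a$, $b^2\ge b$ and $ab\ge0$. Combined with $t+a\ge t$ this yields $(t+a)Q\ge t^2$, and the proof closes by noting $t^2-(3t-2)=(t-1)(t-2)\ge0$. The step I expect to require the most care is precisely this last one: $(t-1)(t-2)\ge0$ is false for real $t\in(1,2)$, so the argument genuinely uses that $t$ is an integer $\ge1$. Finally I would record that equality propagates back to the family $F(x,x,x+1)=3x+1=3z-2$, showing that the bound $(\ref{1})$ is sharp.
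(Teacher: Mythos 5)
Your proof is correct, and it takes a genuinely different route from both of the paper's arguments. The paper proves (\ref{1}) first by induction on $z$, via the recurrence $F(x,y,z+1)=F(x,y,z)+3z^2+3z+1-3xy$, and then a second time by observing that $F'_z=3z^2-3xy\geq 0$ and checking the boundary cases $z=z_{\min}$ one by one. You instead start from the factorization $F=(x+y+z)\cdot\tfrac12\bigl[(x-y)^2+(y-z)^2+(z-x)^2\bigr]$ --- the identity the paper itself only brings in later as (\ref{9}) --- pass to the gap variables $a=y-x$, $b=z-y$, and reduce everything to the single integer inequality $t^2\geq 3t-2$ for $t=a+b\geq 1$. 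Every step checks out: $Q=a^2+ab+b^2\geq 1$ justifies discarding $3x(Q-1)$; $Q\geq t$ and $t+a\geq t$ give $(t+a)Q\geq t^2$; and $(t-1)(t-2)\geq 0$ genuinely requires $t$ to be an integer, as you correctly flag. What your route buys is a proof with no induction and essentially no case analysis, an exact identification of the equality case $F(x,x,x+1)=3z-2$, and an early appearance of the fact (exploited later in Proposition \ref{prop4}) that the quadratic factor equals $1$ precisely when $z=x+1$. What the paper's second proof buys is portability: it transfers essentially verbatim to Proposition \ref{prop2}, whereas your crude bound $(t+a)Q\geq t^2$ is too weak to give $9z-10$ there and would need a sharper estimate on $Q$ together with a separate check at $t=2$.
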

\begin{proof}
Previously note that\newline
a) $F((z-1),(z+1),(z+1))=12z+4>3(z+1)-2;$\newline
b) $F(z,(z+1),(z+1))\enskip=\enskip 3z+2>3(z+1)-2;\newline
c)\enskip $F(z,\enskip z,\enskip(z+1))\enskip =\enskip 3(z+1)\enskip-\enskip2.\newline
Now we use induction over $z\geq1.$ Evidently, for $z=1,$ when either $(x,y)=(0,0)$
or $(x,y)=(0,1),$ the inequality (\ref{1}) holds. Suppose (\ref{1}) holds for some
$z\geq1.$ Now setting $z:=z+1,$ in view of a),\enskip b),\enskip c), we can take $0\leq x\leq z-1,
\enskip y\leq z.$ Then $F(x,y,z+1)=F(x,y,z)+3z^2+3z+1-3xy$ and, according the supposition,
$F(x,y,z+1)\geq (3z-2)+3z^2+3z+1-3(z-1)z=9z-1>3(z+1)-2.$
\end{proof}
The second proof.
\begin{proof}
We have
$$ F'_z(x,y,z)=3z^2-3xy\geq3z^2-3(z-1)z=3z\geq3.$$
So, for any fixed $x,y,$ $F(x,y,z)$ increases over $z.$ Hence, $F(x,y,z)\geq
F(x,y,z_{min}).$ 1) In the case $y=x+1,$ $z_{min}=x+1;$ 2) If $y<x+1,$ then $y=x.$ Since
 $z>x$, then $z_{min}=x+1;$ 3) If $y>x+1,$ then $z_{min}=y.$ \newline
 In case 1) $F(x,y,z)\geq F(x,x+1,x+1)=3x+2=3(z-1)+2=3z-1>3z-2;$\newline
 In case 2) $F(x,y,z)\geq F(x,x,x+1)=3x+1=3z-2;$\newline
 In case 3) $F(x,y,z)\geq F(x,y,y)=x^3+2y^3-3xy^2.$ Note that
 $F(x,y,y)'_y=6y^2-6xy\geq6y^2-6(y-2)y=12y\geq24.$ Since $y_{min}=x+2,$ then we have
 $F(x,y,z)\geq F(x,y,y)\geq F(x,x+2,x+2)=12x+16=12(z-2)+16\geq3z-2.$
 \end{proof}

 \begin{proposition}\label{prop2}
If $z\geq x+2,$ then
\begin{equation}\label{2}
F(x,y,z)\geq9z-10.
\end{equation}

\end{proposition}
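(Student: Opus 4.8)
The plan is to follow the calculus-flavoured second proof of Proposition~\ref{prop1}, only replacing the right-hand slope $3$ by $9$. The governing identity is $F'_z(x,y,z)=3z^2-3xy$, so the comparison function $h(z)=F(x,y,z)-(9z-10)$ has derivative $h'(z)=3(z^2-xy-3)$. The first step is to show $h$ is increasing on the admissible range. Here the stronger hypothesis $z\geq x+2$ enters exactly as $z\geq x+1$ did before: it gives $x\leq z-2$, and together with $y\leq z$ this yields $xy\leq(z-2)z$, whence $F'_z=3z^2-3xy\geq3z^2-3(z-2)z=6z\geq12>9$. Thus $h'(z)>0$, and it suffices to verify (\ref{2}) at the smallest admissible value $z_{\min}=\max(y,x+2)$, since then $h(z)\geq h(z_{\min})\geq0$ for every $z\geq z_{\min}$.

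The second step splits according to whether $z_{\min}$ equals $x+2$ or $y$. If $y\leq x+1$, then $z_{\min}=x+2$, and short expansions give $F(x,x,x+2)=12x+8$ and $F(x,x+1,x+2)=9x+9$, both of which exceed $9(x+2)-10=9x+8$. If instead $y\geq x+2$, then $z_{\min}=y$ and I would work with $F(x,y,y)=x^3+2y^3-3xy^2$. Since $\partial_x F(x,y,y)=3(x^2-y^2)\leq0$ on $0\leq x\leq y$, this quantity is decreasing in $x$ and hence minimized at the largest admissible $x$, namely $x=y-2$; evaluating gives $F(y-2,y,y)=12y-8\geq9y-10$. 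Combining the two regimes completes the reduction.

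I expect the only genuine care to lie in the reduction, not in the corner arithmetic. The $z$-monotonicity is where $z\geq x+2$ is indispensable: it upgrades the Proposition~\ref{prop1} estimate $F'_z\geq3z$ to $F'_z\geq6z$, which is precisely the slope $9$ demanded by the larger right-hand side, so the hypothesis cannot be relaxed to $z\geq x+1$. The subtlety in the second regime is that, after fixing $z=z_{\min}=y$, one must invoke a secondary monotonicity (this time in $x$) to descend to the extreme corner $x=y-2$; overlooking this would leave a genuine two-variable inequality uncovered. Finally, the constant $-10$ is best possible for a linear bound of slope $9$: at $(x,y,z)=(0,0,2)$ one has $F=8=9\cdot2-10$, so equality is attained.
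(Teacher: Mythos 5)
Your proof is correct and follows essentially the same route as the paper's: the bound $F'_z=3z^2-3xy\geq 6z\geq 12$ reduces everything to $z=z_{\min}=\max(y,x+2)$, followed by the same corner checks $F(x,x,x+2)=12x+8$ and $F(x,x+1,x+2)=9x+9$. The only (harmless) variation is in the regime $y\geq x+2$, where you descend in $x$ to the corner $F(y-2,y,y)=12y-8$ via $\partial_x F(x,y,y)=3(x^2-y^2)\leq 0$, whereas the paper splits off $y=x+2$ and ascends in $y$ to $F(x,x+3,x+3)$; both land at equivalent extreme points and the arithmetic checks out.
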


Here there exist also at least two possibilities of proof. We show the second way.
\begin{proof}
Again
$$ F'_z(x,y,z)=3z^2-3xy\geq3z^2-3(z-2)z=6z\geq12.$$
1)-3) $y=x,x+1,x+2$ respectively, $z_{min}=x+2;$\newline
4) $y>x+2, z_{min}=y.$\newline
 We have\newline
 in case 1) $F(x,x,x+2)=12x+8=12z-16\geq9z-10,\enskip z\geq2;$\newline
 in case 2) $F(x,x+1,x+2)=9x+9=9(z-1)>9z-10;$\newline
 in case 3) $F(x,x+2,x+2)=12x+16=12z-8>9z-10;$\newline
 in case 4) $F(x,y,y)=x^3+2y^3-3xy^2.$ As in proof of Proposition \ref{prop1},
 $F(x,y,y)'_y>0.$ Since $y_{min}=x+3,$ then we have
 $F(x,y,z)\geq F(x,y,y)\geq F(x,x+3,x+3)=27x+54=27(z-1)>10z-1,\enskip z\geq3.$
\end{proof}
\section{Results $(i), (ii)$}
\begin{proposition}\label{prop3}
$1)$ For every positive $n,$ except for $n\equiv\pm3 \pmod9,$ $\nu(n)\geq1;$
$2)$ If  $n\equiv\pm3 \pmod9,$ then $\nu(n)=0.$
\end{proposition}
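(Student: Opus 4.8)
The plan is to base everything on the factorization $F(x,y,z)=(x+y+z)\bigl(x^2+y^2+z^2-xy-yz-zx\bigr)$, which upon setting $s=x+y+z$ and $p=xy+yz+zx$ becomes the compact identity $F=s^3-3sp$. Both halves of the Proposition then reduce to elementary consequences of this single identity.

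For part $2)$ (nonrepresentability) I would work modulo $9$. From $F=s^3-3sp$ and Fermat's little theorem one gets $F\equiv s^3\equiv s\pmod 3$. Now suppose $F\equiv\pm3\pmod9$; then in particular $3\mid F$, so $3\mid s$. But if $3\mid s$, writing $s=3t$ gives $s^3=27t^3\equiv0$ and $3sp=9tp\equiv0$ modulo $9$, whence $F\equiv0\pmod9$, contradicting $F\equiv\pm3\pmod9$. Therefore no value of $F$ is congruent to $\pm3$ modulo $9$, and $\nu(n)=0$ for all such $n$.

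For part $1)$ (representability of every admissible $n$) I would exhibit three explicit one-parameter families that together cover all residues except $\pm3\pmod9$. A direct expansion gives
\[
F(x,x,x+1)=3x+1,\qquad F(x,x+1,x+1)=3x+2,\qquad F(x,x+1,x+2)=9x+9.
\]
The first family realizes every $n\equiv1\pmod3$, the second every $n\equiv2\pmod3$, and the third every $n\equiv0\pmod9$; in each case the triple satisfies $0\le x\le y\le z$ and $z\ge x+1$ for all $x\ge0$. Since an integer with $n\not\equiv\pm3\pmod9$ lies in one of the classes $1,2\pmod3$ or $0\pmod9$, each such $n$ is represented, giving $\nu(n)\ge1$.

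The only genuinely delicate point is the class $n\equiv0\pmod9$: the two ``diagonal'' families $F(x,x,x+1)$ and $F(x,x+1,x+1)$ miss all multiples of $3$, so one must discover the third family $F(x,x+1,x+2)=9(x+1)$. I expect this to be the main obstacle — producing a family whose value runs through \emph{all} multiples of $9$ as an exact arithmetic progression, rather than merely guaranteeing divisibility by $9$. An alternative but less economical route for this class would exploit the norm-type multiplicativity of $F$ (the product identity arising from the ring $\mathbb{Z}[\theta]/(\theta^3-1)$) together with the single representation $9=F(0,1,2)$; however, the explicit family above is cleaner and avoids having to treat the factors $m\equiv\pm3\pmod9$ separately.
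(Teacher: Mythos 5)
Your proposal is correct, and it splits naturally into a part that coincides with the paper and a part that improves on it. For part $1)$ your three families $F(x,x,x+1)=3x+1$, $F(x,x+1,x+1)=3x+2$, $F(x,x+1,x+2)=9(x+1)$ are exactly the paper's equalities (\ref{4}), (\ref{3}), (\ref{5}) after the substitution $x=k-1$ (resp.\ $x=k$), and the residue-coverage argument is the same; you are also right that the only non-obvious ingredient is the third family hitting every multiple of $9$ exactly. For part $2)$ you take a genuinely different route. The paper, like you, begins with $F\equiv x+y+z\pmod 3$ and the factorization $F=(x+y+z)(x^2+y^2+z^2-xy-yz-zx)$, but it then needs a case analysis over the residue patterns $(x,y,z)\equiv(i,i,i)$ and $(0,1,2)\pmod 3$ to check that the quadratic factor is also divisible by $3$. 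Your identity $F=s^3-3sp$ with $s=x+y+z$, $p=xy+yz+zx$ eliminates the case split entirely: $3\mid F$ forces $3\mid s$, whence $s^3\equiv 0$ and $3sp\equiv 0\pmod 9$, so $9\mid F$. This is shorter and arguably more transparent; the paper's version yields as a by-product the explicit fact that the quadratic factor is divisible by $3$ in these cases, but that fact is not used again, so nothing is lost by your streamlining.
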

\begin{proof}
1) The statement follows from the following three equalities:
\begin{equation}\label{3}
F(k-1,k,k)=3k-1;
\end{equation}
\begin{equation}\label{4}
F(k-1,k-1,k)=3k-2;
\end{equation}
\begin{equation}\label{5}
F(k,k+1,k+2)=9(k+1).
\end{equation}
\newpage

2) Let, for $n\equiv\pm 3 \pmod9,$ we have
\begin{equation}\label{6}
n= F(x,y,z)).
\end{equation}
However, we show that, if $F(x,y,z)$ is divisible by 3, then it divisible by 9.
Note that, since $x^3\equiv x \mod3,$ then
\begin{equation}\label{7}
 F(x,y,z)\equiv x+y+z \pmod3.
\end{equation}
So, by (\ref{6})
\begin{equation}\label{8}
x+y+z\equiv0 \pmod3.
\end{equation}
 By the symmetry, it is sufficient to
consider the cases $(x,y,z)\equiv(i,i,i)\pmod3, \enskip
 i=0,1,2,$ and $(x,y,z)\equiv(0,1,2) \pmod3.$
Furthermore, note that
\begin{equation}\label{9}
 F(x,y,z)=(x+y+z)(x^2+y^2+z^2-xy-xz-yz)
\end{equation}
and it is easy to see that in the considered cases also
\begin{equation}\label{10}
 x^2+y^2+z^2-xy-xz-yz\equiv0 \pmod3.
 \end{equation}
 So, by (\ref{8}) - (\ref{10}), $F(x,y,z)\equiv0\pmod9$ which contradicts the
 representation (\ref{6}).
\end{proof}
\section{Result $(iii)$}
\begin{proposition}\label{prop4}
For every prime $p\neq3,$ $\nu(p)=\nu(2p)=1.$
\end{proposition}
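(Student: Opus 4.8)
The plan is to extract everything from the factorization (\ref{9}), writing $F(x,y,z)=s\,Q$ with $s=x+y+z$ and
$Q=x^2+y^2+z^2-xy-yz-zx=\tfrac12\bigl((x-y)^2+(y-z)^2+(z-x)^2\bigr)\geq0$.
Existence is already available: since $p\neq3$ gives $p\not\equiv0\pmod3$ and $2p\not\equiv0\pmod3$, neither $p$ nor $2p$ is $\equiv\pm3\pmod9$, so Proposition~\ref{prop3} yields $\nu(p)\geq1$ and $\nu(2p)\geq1$ (concretely from the families $F(k-1,k-1,k)=3k-2$ and $F(k-1,k,k)=3k-1$ of (\ref{3})--(\ref{4})). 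The whole content of the statement is therefore uniqueness, and my strategy is to prove that every representation of a number of shape $p$ or $2p$ forces $z=x+1$.

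The engine is a dichotomy on $Q$. Setting $a=y-x\geq0$ and $b=z-y\geq0$ one has $Q=a^2+ab+b^2$, while the admissibility condition $z\geq x+1$ reads $a+b\geq1$. I would record two elementary facts: $Q=1$ holds exactly when $a+b=1$, i.e. when $(x,y,z)$ is $(x,x,x+1)$ or $(x,x+1,x+1)$; and $a+b\geq2$ forces $Q\geq3$ (so in particular $Q=2$ never occurs). Thus in any admissible representation either $z=x+1$ with $Q=1$, or $z\geq x+2$ with $Q\geq3$.

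Now suppose $F(x,y,z)=n$ with $n\in\{p,2p\}$, so that $Q\mid n$. In the case $z\geq x+2$ we have $Q\geq3$; since $p$ is prime, the only divisors of $n$ that are $\geq3$ are $p$ and $2p$, whence $Q\in\{p,2p\}$ and $s=n/Q\in\{1,2\}$. On the other hand $z\geq x+2$ and $y\geq x$ give $s=x+y+z\geq3x+2$, so $s\leq2$ forces $x=0$ and then the triple is exactly $(0,0,2)$, for which $F=8\notin\{p,2p\}$. This contradiction rules out $z\geq x+2$, so necessarily $z=x+1$ and $Q=1$, whence $n=s=x+y+z$ with $(x,y,z)\in\{(x,x,x+1),(x,x+1,x+1)\}$, i.e. $n=3x+1$ or $n=3x+2$. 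Because $n\not\equiv0\pmod3$, exactly one of these congruences is solvable, and it pins down $x\geq0$ uniquely; hence $\nu(n)=1$, proving both $\nu(p)=1$ and $\nu(2p)=1$.

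The one delicate point — and the step I expect to carry the argument — is the interaction between the divisor structure and the smallness of $s$: primality of $p$ is what guarantees that a divisor $Q\geq3$ of $2p$ must be $p$ or $2p$, collapsing $s$ to $\{1,2\}$, and the non-representability of $2$ by $a^2+ab+b^2$ is what prevents a spurious second representation of $2p$ via a putative splitting $s=2,\ Q=p$. Verifying that the finitely many triples with $s\leq2$ yield only $F\in\{1,2,8\}$ is the concrete obstacle, but it is a short finite check rather than a genuine difficulty.
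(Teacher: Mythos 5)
Your proof is correct, and it shares the paper's overall skeleton---existence from the families (\ref{3})--(\ref{4}), then uniqueness by showing that any representation with $z\geq x+2$ forces the quadratic factor $Q=x^2+y^2+z^2-xy-yz-zx$ in the factorization (\ref{9}) to be too large a divisor---but you obtain the key estimate by a genuinely different route. The paper bounds $Q$ indirectly: it invokes Proposition~\ref{prop2} (namely $F\geq 9z-10$ for $z\geq x+2$) and divides by $x+y+z\leq 3z-2$ to get $Q\geq 2$ as in (\ref{12}); this suffices for $p$, but for $2p$ it leaves open the possibility $Q=2$, which the paper eliminates by a finite check at $z=2$. You instead write $Q=a^2+ab+b^2$ with $a=y-x$, $b=z-y$ and observe that $a+b=z-x\geq2$ gives $Q\geq\tfrac{3}{4}(a+b)^2\geq3$. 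This is sharper (it shows $Q=2$ is never attained, so no spurious representation of $2p$ can come from the splitting $s=p$, $Q=2$), it is self-contained (no appeal to the growth estimates of Section~2), and it reduces the residual casework to the single triple $(0,0,2)$ arising from $s\leq2$. You are also somewhat more careful than the paper in the divisor analysis for $2p$: the paper asserts that the only alternative splitting is $x+y+z=p$, $Q=2$, silently passing over $s\in\{1,2\}$, whereas you dispose of those cases explicitly. Both arguments are sound; yours trades the paper's reliance on Proposition~\ref{prop2} for a direct identity for the quadratic form, which I would say is the cleaner of the two.
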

\begin{proof}
In view of (\ref{3})-(\ref{4}), for every prime $p$ other than 3, we have
 $\nu(p)\geq1.$ However, in (\ref{3})-(\ref{4}) are used the only two possibilities,
 when $z=x+1.$ In both these cases
 \begin{equation}\label{11}
 x^2+y^2+z^2-xy-xz-yz=1.
 \end{equation}
Let us show that, if $z\geq x+2,$ a representation of prime $p$ is
 impossible. In this case $x+y+z\geq2.$ In view of (\ref{9}), if $p=F(x,y,z),$ then
 it should be $x+y+z=p$ such that (\ref{11}) holds. However, using Proposition \ref{prop2},
 we have
 $$x^2+y^2+z^2-xy-xz-yz=\frac {F(x,y,z)}{x+y+z}\geq$$
 \begin{equation}\label{12}
  \frac {9z-10} {(z-2)+2z} \geq 2, \enskip z\geq2,
  \end{equation}
  and (\ref{11}) is impossible. So, for $p\neq3,\enskip \nu(p)=1.$ Finally, for the
   representation of $2p$ in case
  $z\geq x+2,$ note that, since (\ref{11}) does not hold, it should be $x+y+z=p$
  and $x^2+y^2+z^2-xy-xz-yz=2.$ But, according to (\ref{12}), it is possible only
  if $z=2.$ In this case $x=0, \enskip y=0, 1 \enskip or\enskip 2$ and $F(x,y,z)=8,
   9\enskip or \enskip16.$ Thus, for $p\neq3,\enskip \nu(2p)=1.$
  \end{proof}
  \newpage
  For example, we have a unique representation
  $$ p=x^3+y^3+z^3-3xyz$$
  with $x=y=z-1=\frac{p-1}{3}$ if prime $p\equiv1 \pmod3$ and with
  $x+1=y=z=\frac{p+1}{3}$ if prime $p\equiv2 \pmod3.$\newline
  Also we have a unique representation
  $$ 2p=x^3+y^3+z^3-3xyz$$
  with $x+1=y=z=\frac{2p+1}{3}$ if prime $p\equiv1 \pmod3$ and with 
  $x=y=z-1=\frac{2p-1}{3}$ if prime $p\equiv2 \pmod3.$ 
  
 \section{Result (iv)}
\begin{lemma}\label{L1}
\begin{equation}\label{13}
 (F(x,y,z))^3=F(u,v,w),
\end{equation}
 where
  $$u=F(x,y,z)+9xyz,\enskip v=3(x^2y+y^2z+z^2x),\enskip w=3(x^2z+z^2y+y^2x).$$
  \end{lemma}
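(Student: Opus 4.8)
The plan is to exploit the classical factorization behind the form $F$. Writing $\omega=e^{2\pi i/3}$ for a primitive cube root of unity, identity (\ref{9}) refines to
\begin{equation*}
F(x,y,z)=(x+y+z)\,(x+\omega y+\omega^2 z)\,(x+\omega^2 y+\omega z),
\end{equation*}
so $F(x,y,z)$ is exactly the determinant of the circulant matrix
\begin{equation*}
C(x,y,z)=\begin{pmatrix} x & y & z \\ z & x & y \\ y & z & x \end{pmatrix},
\end{equation*}
whose eigenvalues are the three factors $\lambda_j=x+\omega^j y+\omega^{2j}z$, $j=0,1,2$. The key structural fact I would use is that circulant matrices form a commutative ring isomorphic to $\mathbb{Z}[t]/(t^3-1)$, with $C(x,y,z)$ corresponding to $p(t)=x+yt+zt^2$; in particular any power of a circulant is again circulant.

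First I would form $C(x,y,z)^3$. By multiplicativity of the determinant,
\begin{equation*}
\det\bigl(C(x,y,z)^3\bigr)=\bigl(\det C(x,y,z)\bigr)^3=\bigl(F(x,y,z)\bigr)^3 .
\end{equation*}
Since $C(x,y,z)^3$ is itself circulant, its determinant equals $F$ evaluated at its own first row. Hence it suffices to compute that first row, which is read off from $p(t)^3\bmod(t^3-1)$, i.e.\ from the multinomial expansion of $(x+yt+zt^2)^3$ after reducing every exponent of $t$ modulo $3$. Collecting the three residue classes, the constant class receives $x^3+y^3+z^3$ together with $6xyz$ (from the mixed term $6xyz\,t^3$), giving $x^3+y^3+z^3+6xyz=F(x,y,z)+9xyz=u$; the terms of exponent $\equiv1$ give $3(x^2y+y^2z+z^2x)=v$; and those of exponent $\equiv2$ give $3(x^2z+z^2y+y^2x)=w$. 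Thus the first row of $C(x,y,z)^3$ is precisely $(u,v,w)$, so $\det\bigl(C(x,y,z)^3\bigr)=F(u,v,w)$, and combining this with the displayed determinant identity yields (\ref{13}).

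The argument is essentially mechanical once the circulant viewpoint is in place; the only step demanding care is the bookkeeping in $p(t)^3\bmod(t^3-1)$, in particular tracking that the single monomial $6xyz\,t^3$ collapses into the constant coefficient, which is exactly what converts $F(x,y,z)$ into $u=F(x,y,z)+9xyz$. A computation-heavier alternative avoids matrices entirely: verify the three scalar identities $u+v+w=(x+y+z)^3$, $u+\omega v+\omega^2 w=(x+\omega y+\omega^2 z)^3$, and $u+\omega^2 v+\omega w=(x+\omega^2 y+\omega z)^3$, then multiply them and invoke the factorization above to recover $F(u,v,w)=\bigl(F(x,y,z)\bigr)^3$.
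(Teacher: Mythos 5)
Your argument is correct, and it is genuinely different from what the paper does: the paper's proof of Lemma \ref{L1} consists of the single sentence that the identity ``is proved straightforward,'' i.e.\ a brute-force expansion of $(F(x,y,z))^3$ and of $F(u,v,w)$ followed by comparison of the two polynomials. You instead use the circulant-ring structure: $F(x,y,z)$ is the determinant of the circulant with first row $(x,y,z)$, circulants form a ring isomorphic to $\mathbb{Z}[t]/(t^3-1)$, and cubing $x+yt+zt^2$ modulo $t^3-1$ produces exactly $u+vt+wt^2$ (your bookkeeping is right: the constant class collects $x^3+y^3+z^3+6xyz=F(x,y,z)+9xyz=u$, and the classes of exponent $1$ and $2$ give $v$ and $w$), after which multiplicativity of the determinant yields (\ref{13}). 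This is a cleaner and more illuminating derivation --- it explains \emph{why} such $u,v,w$ exist (closure of the circulant ring under multiplication) rather than merely certifying the identity, and it generalizes at once to $(F(x,y,z))^m$ and to products $F(x,y,z)F(x',y',z')$. Amusingly, the author remarks in the concluding section that $F$ is the determinant of the circulant with first row $(x,y,z)$, but never exploits this fact in the proofs; your proposal does. Your closing alternative --- verifying $u+v+w=(x+y+z)^3$ and $u+\omega v+\omega^2 w=(x+\omega y+\omega^2 z)^3$ together with its conjugate, then multiplying --- is also valid and is just the eigenvalue form of the same observation.
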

  \begin{proof}
 The identity is proved straightforward.
  \end{proof}
 \begin{lemma}\label{L2}
If the numbers $x,y,z$ in $(\ref{13})$ form an arithmetic progression with the
difference $d\geq1,$ then the numbers $v,u,w$ form an arithmetic progression with the
difference $3d^3.$
\end{lemma}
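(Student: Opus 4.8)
The plan is to substitute the arithmetic progression directly into the expressions for $u,v,w$ and verify by computation that, in the order $v,u,w$, they form an arithmetic progression. The one idea that makes this painless is to parametrize the progression symmetrically about its middle term: I would set $y=m$, so that $x=m-d$ and $z=m+d$ with $d\geq1$. This symmetric choice is what keeps the algebra under control, since it forces the unwanted mixed terms to cancel in pairs.

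First I would record the elementary symmetric data in these variables, namely $x+y+z=3m$, $xy+yz+zx=3m^{2}-d^{2}$, and $xyz=m^{3}-md^{2}$. Next I would expand the two cyclic sums appearing in $v$ and $w$. Each of $x^{2}y+y^{2}z+z^{2}x$ and $x^{2}z+z^{2}y+y^{2}x$ has leading part $3m^{3}$, and all the $m^{2}d$ and $md^{2}$ contributions cancel, leaving $x^{2}y+y^{2}z+z^{2}x=3m^{3}-d^{3}$ and $x^{2}z+z^{2}y+y^{2}x=3m^{3}+d^{3}$. Hence $v=9m^{3}-3d^{3}$ and $w=9m^{3}+3d^{3}$.

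Then I would compute $u$. Since $u=F(x,y,z)+9xyz=x^{3}+y^{3}+z^{3}+6xyz$, I would use the identity
\begin{equation}
x^{3}+y^{3}+z^{3}=(x+y+z)^{3}-3(x+y+z)(xy+yz+zx)+3xyz
\end{equation}
together with the symmetric data above to get $x^{3}+y^{3}+z^{3}=3m^{3}+6md^{2}$, while $6xyz=6m^{3}-6md^{2}$; the $md^{2}$ terms cancel and $u=9m^{3}$. Comparing the three values gives $v=9m^{3}-3d^{3}$, $u=9m^{3}$, $w=9m^{3}+3d^{3}$, so $v,u,w$ indeed form an arithmetic progression with $u-v=w-u=3d^{3}$, as claimed. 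The computations are entirely routine; the only real obstacle is bookkeeping, and the substitution $x=m-d,\ z=m+d$ is precisely what eliminates it by making every term except the pure $m^{3}$ and $d^{3}$ contributions vanish.
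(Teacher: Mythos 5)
Your proof is correct and follows essentially the same route as the paper: a direct substitution of the arithmetic progression into $u,v,w$ followed by expansion. The only difference is cosmetic --- you center the progression at the middle term ($x=m-d$, $y=m$, $z=m+d$), which makes the cross terms cancel and yields the cleaner values $v=9m^3-3d^3$, $u=9m^3$, $w=9m^3+3d^3$, whereas the paper writes $y=x+d$, $z=x+2d$ and obtains the same conclusion with slightly messier coefficients.
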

\begin{proof}
Let for $x\geq0, d\geq1,$ we have $y=x+d, \enskip z=x+2d.$ Then
$$v=3(x^2y+y^2z+z^2x)=9x^3+27x^2d+27xd^2+6d^3,$$
$$u=x^3+y^3+z^3+6xyz=9x^3+27x^2d+27xd^2+9d^3,$$
$$w=3(x^2z+z^2y+y^2x)=9x^3+27x^2d+27xd^2+12d^3.$$
Thus $u=v+d_1, \enskip w=v+2d_1,$ where $d_1=3d^3.$
\end{proof}
\begin{remark}\label{r1}
Since here $v<u<w,$ then $(\ref{13})$ we can write in the form $(F(x,y,z))^3=
F(v,u,w);$ further $(F(v,u,w))^3=F(\xi,\eta,\zeta),$ such that $\xi<\eta<\zeta,$ etc.
\end{remark}

\begin{proposition}\label{prop5}
$\limsup (\nu(n))=\infty.$
\end{proposition}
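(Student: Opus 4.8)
The plan is to exhibit, for every $k\ge 1$, a single integer carrying at least $k$ distinct representations; since $k$ is arbitrary this forces $\limsup(\nu(n))=\infty$. The engine is the cube identity of Lemma \ref{L1}, sharpened on arithmetic progressions by Lemma \ref{L2}: the cube of a value coming from an arithmetic progression again comes from an arithmetic progression, so one can iterate (Remark \ref{r1}) while keeping full control of the resulting triples. Concretely I would track the sequence $a_j=9^{3^j}$, where $a_{j+1}=a_j^{\,3}$, and prove by induction that $\nu(a_j)\ge j+1$.

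For the inductive bookkeeping I would maintain the stronger statement that $a_j$ admits at least $j+1$ distinct representations $F(x,x+d,x+2d)$ coming from arithmetic progressions with $x\ge 0$ and $d\ge 1$. The base case $a_0=9=F(0,1,2)$ is the instance $k=0$ of (\ref{5}), an arithmetic progression of common difference $1$. For the step, apply Lemma \ref{L2} to each of the $j+1$ progressions representing $a_j$; by Remark \ref{r1} this produces $j+1$ arithmetic progressions $(v,u,w)$ with $v<u<w$ representing $a_{j+1}=a_j^{\,3}$, each of common difference $3d^3\ge 3$. Writing $a_{j+1}=9M_{j+1}$ with $M_{j+1}=9^{3^{j+1}-1}\ge 1$, equation (\ref{5}) furnishes in addition the representation $a_{j+1}=F(M_{j+1}-1,M_{j+1},M_{j+1}+1)$, an arithmetic progression of common difference $1$. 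Adjoining it to the $j+1$ cubed progressions yields $j+2$ representations, completing the induction provided they are genuinely distinct.

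The main obstacle, and the only place real work is needed, is distinctness. Two points must be checked. First, the cube map is injective on admissible progressions: from the formulas in Lemma \ref{L2} the image has common difference $3d^3$, so different values of $d$ give different images, while for fixed $d$ the middle term $u=9x^3+27x^2d+27xd^2+9d^3$ is strictly increasing in $x\ge 0$, so different starting terms give different images; hence the $j+1$ cubed progressions are pairwise distinct because their preimages are. Second, every cubed progression has common difference at least $3$, whereas the freshly adjoined representation of $a_{j+1}$ has common difference $1$, so it coincides with none of them. Together these give $\nu(a_{j+1})\ge j+2$, whence $\nu(9^{3^j})\ge j+1\to\infty$.

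For completeness I would remark that a second, purely elementary route avoids the cube map altogether: the factorization (\ref{9}) gives $F(x,x+d,x+2d)=9(x+d)d^2$, so any integer of the form $9L$ is represented once for each factorization $L=(x+d)d^2$ with $x\ge 0$. Choosing $L=(k!)^3$ makes $d^2\mid L$ and $L/d^2\ge d$ for every $d\in\{1,\dots,k\}$, producing $k$ distinct admissible progressions and hence $\nu\!\left(9\,(k!)^3\right)\ge k$. The verifications here (integrality of $x$, the ordering $0\le x\le y\le z$ with $z\ge x+1$, and distinctness for different $d$) are routine, so either route closes the argument; I would present the cube-map version as the principal proof since it is the one the preceding lemmas were built for.
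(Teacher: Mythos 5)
Your principal argument is correct and is essentially the paper's own first proof of Proposition \ref{prop5}: you iterate the cube identity of Lemma \ref{L1} on arithmetic progressions via Lemma \ref{L2}, observe that each cubed progression has common difference $3d^3\ge 3$, and adjoin at every stage the fresh difference-$1$ representation supplied by (\ref{5}); the only cosmetic differences are that you seed the induction at $9=F(0,1,2)$ where the paper uses $27=F(2,3,4)$, and that you spell out the injectivity of the cube map on progressions (the paper instead just lists the resulting pairwise distinct differences $1,3,3^4,3^{13},\dots$). Your closing remark, however, is a genuinely different and arguably cleaner route than either of the paper's two proofs: the identity $F(x,x+d,x+2d)=9(x+d)d^2$, which is immediate from (\ref{9}) since $x+y+z=3(x+d)$ and $x^2+y^2+z^2-xy-xz-yz=3d^2$, reduces the whole question to counting factorizations $L=(x+d)d^2$ with $x\ge 0$, and the choice $L=(k!)^3$ gives $\nu\bigl(9(k!)^3\bigr)\ge k$ with no induction, no cube map, and explicit triples distinguished by their common differences. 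What the paper's iterative approach buys is a doubly exponential but structurally uniform family $27^{3^k}$ tied to the circulant identity; what your elementary route buys is brevity and transparency, and it would make Lemmas \ref{L1}--\ref{L2} unnecessary for this particular result.
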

\begin{proof}
Consider sequence $27,27^3,27^{3^2},27^{3^3},...,27^{3^k},... .$
Representation $27^{3^k}=F(0,0,27^{3^{k-1}})$ we call trivial. We are interested in
non-trivial representations of $b_k=27^{3^k}.$ Note that $b_0=27$ has a unique
non-trivial representation defined by (\ref{5}): $b_0=F(2,3,4).$ Thus, by Lemma
\ref{L2}, $b_1=b_0^3$ has at least 2 distinct non-trivial representations: by
(\ref{5}) with $d_1=1$ and with $d_2=3.$
\newpage
 Further, again by Lemma \ref{L2},
 $b_2=b_1^3$ has at least 3 distinct non-trivial representations: by (\ref{5}) with
  $d_1=1,$ $d_2=3$ and $3\cdot3^3=3^4.$
Analogously $b_3=b_2^3$ has at least 4 distinct non-trivial representations: by (\ref{5}) with
  $d_1=1,$ $d_2=3,$ $d_3=3^4$ and $d_4=3\cdot81^3=3^{13};$ ..., $b_k=b_{k-1}^3$ has
at least $k+1$ distinct non-trivial representations: $1,3,3^4,3^{13},...,
3^{(3^k-1)/2}.$ This completes the proof.
\end{proof}
We give also the second proof.
\begin{proof}
We use the homogeneity of $F(x,y,z)$ of degree 3. By induction, show that $\nu(8^k)
\geq k+1.$ It is evident for $k=0.$ Suppose that it is true for some
value of $k.$
Take $k+1$ triples $(x_i,y_i,z_i)$ such that $8^k = F(x_i, y_i, z_i),\enskip
 i=1,...,k+1.$ Then for $k+1$ triples of even numbers $(2x_i,2y_i,2z_i),$
we have $8^{k+1} = F(2x_i,2y_i,2z_i).$ But, by (\ref{3})-(\ref{4}), always there
is a triple of not all even numbers $x=(n-1)/3,\enskip y=(n-1)/3,\enskip z=(n+2)/3)$
or $x=((n-2)/3,\enskip y=(n+1)/3,\enskip z=(n+1)/3),$ where $n=8^{k+1},$ for which
$8^{k+1} = F(x,y,z).$ So $\nu(8^{k+1})\geq k+2.$
\end{proof}
\section{Result (v)}
\begin{lemma}\label{L3}
There is a unique representation of $8^k$ by the form $F(x,y,z)$ with not all even
numbers $x,y,z.$
\end{lemma}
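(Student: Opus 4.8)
The plan is to exploit the factorization (\ref{9}), namely $F(x,y,z)=(x+y+z)(x^2+y^2+z^2-xy-xz-yz)$. I would write $s=x+y+z$ and $Q=x^2+y^2+z^2-xy-xz-yz=\frac{1}{2}\bigl((x-y)^2+(y-z)^2+(z-x)^2\bigr)$; since $z\geq x+1$ we have $Q\geq1$, so $s$ and $Q$ are positive integers with $sQ=8^k=2^{3k}$, which forces each of them to be a power of $2$. Existence of a not-all-even representation is already supplied by (\ref{3})--(\ref{4}): because $8^k\equiv1$ or $2\pmod3$, one of the triples $(x,x,x+1)$ or $(x,x+1,x+1)$ (with $z=x+1$) realizes $8^k$. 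The entire content of the lemma is therefore the \emph{uniqueness}, which I would extract from a parity analysis of the two factors $s$ and $Q$.

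First I would fix the parities of $x,y,z$. Reducing modulo $2$ and using $t^3\equiv t$, one gets $F(x,y,z)\equiv x+y+z+xyz\pmod2$, so for the even number $8^k$ (with $k\geq1$) a representation cannot have exactly one odd coordinate, since that would make $F$ odd. As the coordinates are not all even, only two possibilities remain: exactly two of them are odd, or all three are odd. If all three were odd, then $s=x+y+z$ would be odd and at least $3$, contradicting the fact that $s$ is a power of $2$ dividing $2^{3k}$ for $k\geq1$. Hence exactly two of $x,y,z$ are odd.

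The key step is then the observation that, $Q$ being symmetric, two odd coordinates and one even coordinate give $Q\equiv 1+1+0-1-0-0\equiv1\pmod2$, i.e. $Q$ is odd. Being a power of $2$, this forces $Q=1$. By (the computation behind) (\ref{11}), $Q=1$ holds exactly when $z=x+1$, so $(x,y,z)$ is $(x,x,x+1)$ or $(x,x+1,x+1)$, with $s=3x+1$ or $s=3x+2$ respectively. Since $8^k\not\equiv0\pmod3$, precisely one of the residues $1,2$ matches $8^k\bmod3$; this selects the shape of the triple and then determines $x=(8^k-1)/3$ or $x=(8^k-2)/3$ uniquely, while in either shape the coordinates are automatically not all even. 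This yields the uniqueness, the case $k=0$ reducing to the single triple $(0,0,1)$.

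I expect no serious obstacle: the only genuinely substantive points are the modulo-$2$ computations showing that an odd number of odd coordinates is impossible and that two odd coordinates force $Q$ to be odd; everything else is bookkeeping with the divisor structure of $2^{3k}$ and the elementary equivalence $Q=1\Leftrightarrow z=x+1$. The one place demanding a little care is confirming that the triple produced by (\ref{3})--(\ref{4}) indeed has exactly two odd entries (equivalently, that $x$ has the parity making $F$ even), which the same residue computation provides.
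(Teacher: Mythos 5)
Your proof is correct and follows essentially the same route as the paper's: factor $F=(x+y+z)Q$ with $Q=x^2+y^2+z^2-xy-xz-yz$, observe that a not-all-even triple forces $Q$ to be odd and hence $Q=1$ since $Q\mid 8^k$, and conclude $z=x+1$, leaving only the two shapes from (\ref{3})--(\ref{4}), exactly one of which matches $8^k\bmod 3$. If anything you are more careful than the paper, which silently reduces ``not all even'' to ``exactly two odd'' and does not explicitly dispose of the one-odd case (where $F$ is odd) or the three-odd case (where $x+y+z$ is an odd number $\geq 3$ that would have to divide $8^k$).
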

\begin{proof}
 In (\ref{3})-(\ref{4}) we used the only two possibilities,
 when $z=x+1$ and in both these cases we have the equality (\ref{11}). This gives
 one representation of $8^k$, when $8^k\equiv1\pmod3$ (even $k$) and one
representation of $8^k$, when $8^k\equiv2\pmod3.$(odd $k$). Let now $z\geq x+2.$
Since $Fx,y,z)=(x+y+z)(x^2+y^2+z^2-xy-xz-yz),$ and, in view of the symmetry, the
case, when the numbers  $x,y,z$ in a representation of $8^k$ are not all even,
reduces, say, for the case when $x$ and $y$ are odd, while $z$ is even. But then
$x^2+y^2+z^2-xy-xz-yz$ is odd. In Section 3 we saw that, in the condition
$z\geq x+2,\enskip x^2+y^2+z^2-xy-xz-yz\geq2.$  So it is odd $\geq3.$ This is impossible
in representation of $8^k$ by $F(x,y,z).$
\end{proof}
\begin{theorem}\label{t1}
For every positive $n,$ there exists $k$ such that $\nu(k)=n.$
\end{theorem}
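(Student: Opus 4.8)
The plan is to prove the exact formula $\nu(8^{m})=m+1$ for every integer $m\geq0$; this settles the theorem at once, since given a positive integer $n$ one takes the argument $k=8^{\,n-1}$ and obtains $\nu(k)=\nu(8^{\,n-1})=n$. So the whole question reduces to pinning down $\nu$ exactly along the powers of $8$, where Proposition \ref{prop5} already supplied the lower bound $\nu(8^{m})\geq m+1$.

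First I would set up a recursion by splitting the representations of $8^{m+1}$ according to parity. By homogeneity of degree $3$ we have $F(2a,2b,2c)=8\,F(a,b,c)$, so the map $(a,b,c)\mapsto(2a,2b,2c)$ carries a representation $8^{m}=F(a,b,c)$ to an \emph{all-even} representation of $8^{m+1}$; conversely, halving an all-even representation of $8^{m+1}$ recovers a representation of $8^{m}$, the admissibility conditions being preserved because $2c\geq 2a+1$ forces $c\geq a+1$ for integers. Hence the all-even representations of $8^{m+1}$ are in bijection with \emph{all} representations of $8^{m}$, and they contribute exactly $\nu(8^{m})$.

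Next I would invoke Lemma \ref{L3}, which asserts that among the representations of $8^{m+1}$ there is exactly one in which $x,y,z$ are \emph{not} all even. Adding the two disjoint contributions yields the \emph{exact} recursion $\nu(8^{m+1})=\nu(8^{m})+1$. For the base case I would check directly, using $F(x,y,z)\geq 3z-2$ from Proposition \ref{prop1}, that $\nu(1)=1$: the bound forces $z=1$, leaving only $(x,y,z)=(0,0,1)$. Iterating then gives $\nu(8^{m})=m+1$ for all $m\geq0$, and reading off $k=8^{\,n-1}$ completes the proof.

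The argument is short once the recursion is in hand, so the single delicate point is that the parity step is an \emph{equality} and not merely the inequality $\nu(8^{m+1})\geq\nu(8^{m})+1$ already known from Proposition \ref{prop5}. This exactness rests entirely on the \emph{uniqueness} clause of Lemma \ref{L3}; the doubling bijection for the all-even part is routine. I therefore expect the main obstacle to be conceptually absorbed by Lemma \ref{L3}, with only the bookkeeping of the two parity cases and the base-case computation left to carry out.
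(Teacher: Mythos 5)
Your proposal is correct and follows essentially the same route as the paper: both arguments reduce the theorem to the exact count $\nu(8^{m})=m+1$, obtained by splitting the representations of $8^{m+1}$ into the all-even ones (in bijection with representations of $8^{m}$ via doubling, using homogeneity) and the unique not-all-even one supplied by Lemma \ref{L3}. The only difference is cosmetic: the paper runs the induction step as a proof by contradiction with a hypothetical extra triple, while you state the same parity decomposition directly as the recursion $\nu(8^{m+1})=\nu(8^{m})+1$.
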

\begin{proof}
In the second proof of Result $iv,$ we showed that $\nu(8^k)\geq k+1.$ To prove the
theorem, it suffices to prove that really we have here the equality: $\nu(8^k)=k+1.$
Again use induction. Suppose that it is true for some value of $k.$ As in the
second proof of Result $iv,$ take $k+1$ triples $(x_i,y_i,z_i)$ such that
$8^k = F(x_i, y_i, z_i),\enskip i=1,...,k+1.$ By Lemma \ref{L3}, among these triples
there exists a unique triple, say, $(x_{k+1}, y_{k+1}, z_{k+1})$ with not all 
even numbers.
\newpage
Then for $k+1$ triples of even numbers $(2x_i,2y_i,2z_i),$ we have $8^{k+1} =
 F(2x_i,2y_i,2z_i),$ and only one of them $(2x_{k+1}, 2y_{k+1}, 2z_{k+1})$ contains
 not all numbers divisible by 4. Besides, there is a unique triple with odd two numbers.
 Suppose now, that there is an additional $(k+3)$-th triple $(x*,y*,z*)$  such that
 $8^{k+1}=F(x*,y*,z*).$ All numbers $x*,y*,z*$ should be divisible by 4. But then
 a triple $(x*/2,y*/2,z*/2)$ is an additional $(k+2)$-th triple for representation
 of $8^k.$ This contradicts the inductional supposition. The theorem follows.
\end{proof}
In conclusion, note that the sequence of $\{\nu(n)\}$ is A261029 \cite{3} (including
also $n=0).$ Besides, the smallest numbers $k=k(n)$ from Theorem 1 are
presented in our with Peter J. C. Moses sequence A260935 \cite{3}.
\section{On a Carmichael paper}
While browsing the Bulletin of the American Mathematical Society, Michel Marcus
found a Carmichael paper \cite{1} on the same topic (now it is available in the
sequence A074232). The methods of \cite{1} and the present paper are quite different.
So comparing the results, we can consider proof of (i)-(iii) as a \slshape
short proof \upshape\enskip of the main results of \cite{1}, while (iv)-(v) give new results.\newline
\indent The author is happy to unwittingly continue with a new approach a
research of the outstanding mathematician Robert Daniel Carmichael in exact CENTENARY
(Aug 1915 - Aug 2015) of his paper.\newline
\indent Note that we published almost at the same time also the paper
\cite{2} which was inspired by the sequences A072670 and A260803 \cite{3} by R.
Zumkeller and D. A. Corneth respectively. These sequences with its restriction
conditions essentially inspired also the present paper, since the author always
remembered the remarkable form $ x^3+y^3+z^3-3xyz$ which is the determinant of
the circulant matrix with the first row $ (x,y,z).$


\begin{thebibliography}{3}
\bibitem 1 R. \enskip D. \enskip Carmichael, \slshape On the representation of numbers in the form \upshape $ x^3+y^3+z^3-3xyz,$   Bull. Amer. Math. Soc. 22 (1915), 111-117.
\bibitem 2 V. Shevelev, Representation of positive integers by the form $x_1\cdot...\cdot x_k+x_1+...+x_k,$ arXiv:1508.03970 [math.NT], 2015.
\bibitem 3 N.\enskip J.\enskip A.\enskip Sloane,\enskip\slshape The On-Line Encyclopedia of Integer Sequences \upshape \enskip http://oeis.org.
\end{thebibliography}
\end{document}